\date{}
\newtheorem{lemma}{Lemma}
\newtheorem{theorem}[lemma]{Theorem}
\begin{document}

\title{{\bf A short proof of some recent results related to Ces{\`a}ro function spaces}}

\author {Sergey V. Astashkin\thanks{Research partially supported by
RFBR grant no. 12-01-00198-a.} \,{\small and} Lech Maligranda}

\date{}

\maketitle

\renewcommand{\thefootnote}{\fnsymbol{footnote}}

\footnotetext[0]{2000 {\it Mathematics Subject Classification}:
46E30, 46B20, 46B42}
\footnotetext[0]{{\it Key words and phrases}: Ces{\`a}ro function spaces, weak Banach-Saks 
property, Bochner function spaces, subspaces, dual Banach space, Radon-Nikodym property}

\vspace{-7 mm}

\begin{abstract}
\noindent {\footnotesize We give a short proof of the recent results that, for every $1\leq p< \infty,$ 
the Ces{\`a}ro function space $Ces_p(I)$ is not a dual space, has the weak Banach-Saks property 
and  does not have the Radon-Nikodym property.}
\end{abstract}


\medskip

The main purpose of this paper is to give a short proof of the following recent results related to 
the Ces{\`a}ro function spaces: for every $1\le p< \infty$ the space $Ces_{p} = Ces_{p}(I)$ has 
the weak Banach-Saks property \cite[Theorem 8]{AM}, does not have the Radon-Nikodym 
property and it is not a dual space \cite[Corollaries 5.1 and 5.5]{KamKub}.

{\it The Ces{\`a}ro function spaces} $Ces_{p} = Ces_{p}(I)$ $(1 \leq p < \infty)$, where $I = [0, 1]$ 
or $I = [0, \infty),$ are the classes of all Lebesgue measurable real functions $f$ on $I$ such that
$$
\|f\|_{C(p)} = \left [ \int_I \left (\frac{1}{x}\int_{0}^{x} |f(t)|
~dt \right)^{p} ~dx \right]^{1/p} < \infty.
$$

A Banach space $X$ is said to have the {\it weak Banach-Saks property} if every weakly null 
sequence in $X$, say $(x_n)$, contains a subsequence $(x_{n_k})$ whose first arithmetical 
means converge strongly to zero, that is, $\lim_{m\to \infty} \frac{1}{m}\Big\|\sum_{k=1}^m 
x_{n_k}\Big\|_X = 0.$

It is known that uniformly convex spaces, $c_0$, $l^1$ and $L^1$ have the weak Banach-Saks 
property, whereas $C[0,1]$ and $l^\infty$ do not have. We should mention that the result on 
$L^1$ space, proved by Szlenk \cite{Sz} in 1965, was a very important break-through in 
studying of the weak Banach-Saks property.

In 1982, Rakov \cite[Theorem 1]{Ra} proved that a Banach space with non-trivial type (or equivalently
B-convex) has the weak Banach-Saks property (cf. also Tokarev \cite[Theorem 1]{To}). Recently 
Dodds-Semenov-Sukochev \cite{DSS} investigated the weak Banach-Saks property of rearrangement 
invariant spaces and Astashkin-Sukochev \cite{AS} have got a complete description of Marcinkiewicz 
spaces with the latter property.

The spaces $Ces_{p}[0, 1]$ for $1 \leq p < \infty$ are neither B-convex (they have trivial type) nor 
rearrangement invariant. Nevertheless, by studying the dual space $Ces_{p}[0, 1]^*,$ 
Astashkin-Maligranda \cite[Theorem 8]{AM} proved that these spaces have the weak Banach-Saks 
property. Here, we present another simpler proof of this result which does not use any knowledge of the 
structure of the latter dual space. 

\vspace{3mm}
\begin{theorem}\label{THEOREM 1.}
 For every $1 \leq p < \infty$ the Ces{\`a}ro function space $Ces_{p}(I)$ has the weak Banach-Saks 
 property.
\end{theorem}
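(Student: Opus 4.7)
I would embed $Ces_p(I)$ linearly and isometrically into a Bochner space $L^p(I;L^1([0,1]))$, deduce the weak Banach--Saks property of this ambient space from Szlenk's theorem, and then use that WBS passes to closed subspaces.

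\textbf{The embedding.} Define $\tilde T:Ces_p(I)\to L^p(I;L^1([0,1]))$ by
\[(\tilde Tf)(x)(u)=f(ux),\qquad x\in I,\ u\in[0,1].\]
The map $\tilde T$ is linear, and the change of variable $t=ux$ yields
\[\|\tilde Tf\|_{L^p(L^1)}^{p}=\int_I\left(\int_0^1|f(ux)|\,du\right)^{p}dx=\int_I\left(\frac{1}{x}\int_0^x|f(t)|\,dt\right)^{p}dx=\|f\|_{C(p)}^{p},\]
so $\tilde T$ is a linear isometry and identifies $Ces_p(I)$ with a closed subspace of $L^p(I;L^1([0,1]))$.

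\textbf{WBS of the ambient space and conclusion.} By Szlenk's theorem \cite{Sz}, $L^1([0,1])$ has the weak Banach--Saks property. It is a standard fact that $L^p(\mu;X)$ inherits WBS from $X$ for every $1\leq p<\infty$: for $p=1$ this is immediate since $L^1(I;L^1[0,1])=L^1(I\times[0,1])$, to which Szlenk's theorem applies directly; for $1<p<\infty$ one combines the uniform convexity of the outer $L^p$-norm with a fibrewise application of Szlenk's extraction in $L^1$. Hence $L^p(I;L^1([0,1]))$ has WBS, and since this property passes to closed subspaces, so does $Ces_p(I)$.

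\textbf{Main obstacle.} The principal technical point is the Bochner-type lifting of WBS for $1<p<\infty$. Crucially, one cannot avoid it by the naive lattice bound $\bigl|\sum_k f_{n_k}\bigr|\leq\sum_k|f_{n_k}|$, which destroys the cancellation responsible for WBS: already for Rademacher sequences $(f_n)$ one has $C|f_{n_k}|\equiv 1$, giving only the trivial bound $m$, whereas $\bigl\|\sum_{k=1}^m f_{n_k}\bigr\|_{C(p)}=O(\sqrt m)$. The Bochner embedding, together with the corresponding lifting step, is precisely what allows one to work linearly inside an ambient space and retain this cancellation, which is what makes the ``short proof'' short.
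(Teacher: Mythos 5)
Your plan coincides with the paper's: the substitution map $(\tilde Tf)(x)(u)=f(ux)$ is exactly the isometry of the paper's Lemma 1 (there phrased via the mixed-norm space $L^p(I)[L^1[0,1]]$, which is identified with the Bochner space $L^p(I;L^1[0,1])$ by the results of Ellis and Bukhvalov \cite{El58,Bu} --- you silently assume the strong measurability of $x\mapsto f(\cdot\,x)$ needed for this identification, which is harmless since $L^1[0,1]$ is separable), and the final step, heredity of the weak Banach--Saks property to closed subspaces, is the same.

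The gap is in the lifting step for $1<p<\infty$, which you yourself flag as the main obstacle and then dispose of with a sketch that does not work. The statement that $L^p(\mu;X)$ inherits the weak Banach--Saks property from $X$ is true, but it is not a ``standard fact'' with a two-line proof: it is precisely Cembranos' theorem \cite[Theorem C]{Ce94} (see also \cite{Lin04}), which is what the paper cites at this point. Your proposed argument --- ``uniform convexity of the outer $L^p$-norm plus a fibrewise application of Szlenk's extraction'' --- fails as stated. First, a weakly null sequence in $L^p(I;L^1[0,1])$ need not have weakly null fibers: take $f_n(x,\cdot)=r_n(x)\,g$ with $(r_n)$ the Rademacher functions and $g\in L^1[0,1]$ fixed; this is weakly null in $L^p(I;L^1)$, yet for a.e.\ fixed $x$ the fiber sequence oscillates between $g$ and $-g$ and converges weakly to nothing, so there is nothing for a fibrewise Szlenk extraction to act on. Second, even when every fiber sequence is weakly null, Szlenk's theorem yields a subsequence depending on the fiber, whereas you need one subsequence whose Ces\`aro means are small simultaneously for almost every point of a continuum of fibers; no naive diagonalization provides this, and uniform convexity of the outer norm controls only the scalar magnitudes $\|f_n(x)\|_{L^1}$, not the cancellation inside the fibers --- handling that interaction (via subsequence splitting into an equi-integrable part and an essentially disjoint part) is exactly the content of Cembranos' proof. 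The defect is repairable simply by replacing your sketch with a citation of \cite{Ce94}, after which your argument becomes the paper's; as written, the key step is asserted rather than proved.
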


The proof will be based on the following simple observation. Recall that {\it the space with mixed norm} 
$L^p(I)[L^1[0, 1]]$ consists of all classes of Lebesgue measurable functions on $I\times [0,1]$
 $x(s,t)$ such that for a.e. $s\in I$ the function $x(s,\cdot)\in L^1[0, 1]$ and the function 
 $\|x(s,\cdot)\|_{L^1[0, 1]}\in L^p(I)$ with the norm $ \|x\|_{L^p(I)[L^1[0, 1]]}=\|\|x(s,\cdot)\|_{L^1[0, 1]}\|_{L^p(I)}$ 
(see, for example, \cite[\S\,11.1, p.~400]{KA}).

\begin{lemma}\label{LEMMA 1.}
For every $1 \leq p < \infty$ the space $Ces_{p}(I)$ is isometric to a closed subspace of the mixed norm 
space $L^p(I)[L^1[0, 1]]$.
\end{lemma}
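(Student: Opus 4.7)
The plan is to exhibit an explicit linear isometric embedding and note that an isometric image of a Banach space into a Banach space is automatically closed.

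More precisely, for a measurable function $f$ on $I$, I would define $Tf$ on $I\times [0,1]$ by
\[
(Tf)(s,t) = f(st), \qquad s\in I,\ t\in[0,1].
\]
The map $T$ is obviously linear. To compute its norm as a map into $L^p(I)[L^1[0,1]]$, I would first evaluate the inner $L^1[0,1]$-norm in the variable $t$. For fixed $s>0$, the substitution $u=st$ (with $du=s\,dt$) gives
\[
\int_0^1 |f(st)|\,dt \;=\; \frac{1}{s}\int_0^s |f(u)|\,du,
\]
which is precisely the Ces\`aro average appearing in $\|f\|_{C(p)}$. Raising this equality to the $p$-th power and integrating over $s\in I$ then yields
\[
\|Tf\|_{L^p(I)[L^1[0,1]]}^p \;=\; \int_I \Bigl(\frac{1}{s}\int_0^s |f(u)|\,du\Bigr)^{p}\,ds \;=\; \|f\|_{C(p)}^p,
\]
so $T$ is an isometry.

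It remains to observe that $Ces_p(I)$ is a Banach space, so its isometric image $T(Ces_p(I))$ is complete, and hence closed inside $L^p(I)[L^1[0,1]]$. This gives the desired isometric embedding onto a closed subspace.

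There is no real obstacle here; the only nontrivial ingredient is the substitution $u=st$, which converts integration in $t\in[0,1]$ against the ``dilated'' function $f(s\,\cdot)$ into the Ces\`aro average of $f$ on $[0,s]$. One minor point worth mentioning in the write-up is that $st$ lies in $I$ in both cases $I=[0,1]$ and $I=[0,\infty)$, so $f(st)$ is well-defined and measurable (as a composition with a jointly measurable map) on $I\times[0,1]$.
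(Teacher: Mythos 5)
Your proposal is correct and coincides with the paper's own argument: the paper uses exactly the same map ${\cal S}f(x,t)=f(xt)$ and the same change of variables to identify the inner $L^1[0,1]$-norm with the Ces{\`a}ro average, so that the embedding is isometric. Your additional remark that the image is closed because $Ces_p(I)$ is complete is a correct (and welcome) explicit justification of a point the paper leaves implicit.
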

\begin{proof} In fact, the mapping $f(t) \longmapsto {\cal S}f(x,t) = f(xt)$ is such an isometry from
$Ces_{p}(I)$ into $L^p(I)[L^1[0, 1]]$ since
$$
\| f\|_{C(p)} = \Big\| \frac{1}{x} \int_0^x |f(t)| \,dt \Big\|_{L^p(I)} = \Big\| \int_0^1 |f(tx)| \,dt \Big\|_{L^p(I)} = 
\left \|\, \| {\cal S}f(x,\cdot)\|_{L^1[0,1]} \right \|_{L^p(I)}.
$$ 
\end{proof}

\begin{proof}[Proof of Theorem \ref{THEOREM 1.}]
Firstly, we note that the Bochner vector-valued Banach space \newline$L^p(I, L^1[0, 1])$ coincides with the 
mixed norm space $L^p(I)[L^1[0, 1]]$ (see \cite[Theorem 1.1]{El58}, \cite[Theorem 2.2]{Bu}; cf. also 
\cite[pp. 282-283]{Mal}). Moreover, by the Szlenk theorem \cite{Sz}, the space 
$L^1(I)[L^1[0, 1]]=L^1(I\times [0,1])$ has the weak Banach-Saks property. Therefore, applying the Cembranos 
theorem \cite[Theorem C]{Ce94} (see also \cite[pp. 295-302]{Lin04}), we see that the same is true also for the 
space $L^p(I)[L^1[0, 1]].$ Since, due to Lemma \ref{LEMMA 1.}, the Ces{\`a}ro function space $Ces_{p}(I)$ 
is isometric to a closed subspace of $L^p(I)[L^1[0, 1]]$ and any closed subspace inherits the weak Banach-Saks 
property, then $Ces_{p}(I)$ has this property as well. The proof is complete.
\end{proof}

The following results were proved in \cite{KamKub} (see Corrollaries 5.1 and 5.5) by using an isometric 
representation of the dual space of $Ces_{p}(I)$, $1\le p<\infty.$ Here, we show that they are rather simple 
consequences of well-known classical theorems.

\begin{theorem}\label{THEOREM 2.}
Let $1 \leq p < \infty.$ Then

(a) $Ces_{p}(I)$ is not a dual space; 

(b) $Ces_{p}(I)$ does not have the Radon-Nikodym property.
\end{theorem}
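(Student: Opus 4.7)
The strategy is to prove (b) first and deduce (a) from it via the classical Dunford--Pettis theorem that every separable dual space has the Radon--Nikod\'ym property. For (b) it suffices, since $L^1[0,1]$ does not have RNP and the RNP is hereditary, to exhibit an isomorphic copy of $L^1[0,1]$ inside $Ces_{p}(I)$.

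The key observation is that, on functions supported away from $0$ (and from infinity when $I=[0,\infty)$), the Ces{\`a}ro norm is equivalent to the $L^1$ norm. Fix $0 < a < b$ with $b < 1$ when $I=[0,1]$, and denote by $E_{a,b}$ the subspace of $Ces_{p}(I)$ consisting of those $f$ whose support lies in $[a,b]$. For such $f$ one has $\int_0^x |f(t)|\,dt = 0$ for $x<a$, $\int_0^x |f(t)|\,dt \le \|f\|_{L^1}$ for $x\in[a,b]$, and $\int_0^x |f(t)|\,dt = \|f\|_{L^1}$ for $x>b$. Substituting into the definition of $\|\cdot\|_{C(p)}$ yields the two-sided estimate
$$
\|f\|_{L^1}^p \int_b^{R} x^{-p}\,dx \;\le\; \|f\|_{C(p)}^p \;\le\; \|f\|_{L^1}^p \int_a^{R} x^{-p}\,dx,
$$
where $R$ is the right endpoint of $I$. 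Both integrals are finite and strictly positive (for $I=[0,\infty)$ one needs $p>1$ for convergence at infinity, but $Ces_{1}[0,\infty)=\{0\}$, so that case is vacuous). Hence the Ces{\`a}ro norm and the $L^1$ norm are equivalent on $E_{a,b}$, and $E_{a,b}$ is isomorphic to $L^1[a,b]\cong L^1[0,1]$.

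Part (b) now follows at once: $L^1[0,1]$ fails RNP and RNP descends to closed subspaces, so $Ces_{p}(I)$ fails RNP. For (a), note that $Ces_{p}(I)$ is separable, being a Banach function space with order continuous norm. If $Ces_{p}(I)$ were isomorphic to some dual $Y^*$, then the separability of $Y^*$ would force $Y$ to be separable, and Dunford--Pettis would give $Y^*$ the RNP, contradicting (b). Hence $Ces_{p}(I)$ is not a dual space.

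There is no serious obstacle: the whole argument reduces to the elementary observation that the Ces{\`a}ro averaging operator is bounded both above and below on functions whose support is a compact subinterval of $I$ bounded away from $0$ (and from infinity). The only subtlety worth flagging is the degenerate pair $(p,I)=(1,[0,\infty))$, which is handled by noting that $Ces_{1}[0,\infty)$ is trivial.
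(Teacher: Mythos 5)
Your proof is correct, but it takes a genuinely different route from the paper's. The common core is the same observation: on functions supported in a compact subinterval $[a,b]$ with $0<a<b$ (and $b<1$ when $I=[0,1]$), the Ces{\`a}ro norm is two-sidedly equivalent to the $L^1$ norm, so $Ces_{p}(I)$ contains a closed isomorphic copy of $L^1[0,1]$; this is exactly the content of the paper's Lemma \ref{LEMMA 2.}, except that the paper goes further and constructs an equivalent norm $\|\cdot\|_{C(p)}^*$ under which the copy is isometric. The paper needs that extra step because it proves (a) first, via the Bessaga--Pe{\l}czy\'nski theorem (separable dual spaces have the Krein--Milman property) combined with the fact that the unit ball of $L^1[0,1]$ has no extreme points, and then deduces (b) from the implication RNP $\Rightarrow$ KMP. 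You reverse the order: you get (b) directly from the facts that RNP is an isomorphic invariant inherited by closed subspaces and that $L^1[0,1]$ fails it, and then obtain (a) from the Dunford--Pettis theorem that separable dual spaces have RNP; this lets you dispense with the renorming altogether and is, if anything, slightly shorter, at the price of invoking Dunford--Pettis instead of Bessaga--Pe{\l}czy\'nski. Two minor remarks: the separability of $Ces_{p}(I)$, which both you and the paper use without proof, does indeed follow from order continuity of the norm (or directly from density of simple functions), so that step is sound; and your flag of the degenerate pair $(p,I)=(1,[0,\infty))$ is apt, since $Ces_{1}[0,\infty)=\{0\}$ -- though strictly speaking the zero space \emph{is} a dual space, so ``vacuous'' should be read as ``excluded by convention,'' a case the paper's Lemma \ref{LEMMA 2.} also passes over silently.
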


Firstly, we prove the following auxiliary statement.

\begin{lemma}\label{LEMMA 2.}
For every $1 \leq p < \infty$ there is a norm $\|\cdot\|_{C(p)}^*$ equivalent to the usual norm in 
$Ces_{p}(I)$ such that the space $(Ces_{p}(I),\|\cdot\|_{C(p)}^*)$ contains a closed subspace 
isometric to the space $L^1[0,1].$
\end{lemma}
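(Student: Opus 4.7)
The plan is to exhibit inside $Ces_p(I)$ an explicit closed subspace $Y$ that is isomorphic to $L^1[0,1]$ under the usual Ces{\`a}ro norm, show that $Y$ is the range of a contractive projection, and then construct the required equivalent norm via the resulting direct-sum decomposition.

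I would fix an interval $[a,b] \subset I$ with $0 < a < b < \sup I$ (the non-trivial cases being $I = [0,1]$ with any $p \ge 1$, and $I = [0,\infty)$ with $p > 1$, since $Ces_1([0,\infty)) = \{0\}$ by Fubini), and set
\[
Y := \{ f \in Ces_p(I) : \operatorname{supp} f \subset [a,b] \}.
\]
The key observation is that on $Y$ the Ces{\`a}ro norm is equivalent to the $L^1([a,b])$-norm. The upper bound comes from the trivial pointwise estimate $\tfrac{1}{x}\int_0^x |f(t)|\,dt \le \|f\|_{L^1}/x$ for $f \in Y$ and $x \ge a$, which is $L^p$-integrable in $x$ on $[a,\sup I]$ (the use of $p > 1$ when $I=[0,\infty)$ is essential here). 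For the lower bound, observe that $\tfrac{1}{x}\int_0^x |f(t)|\,dt = \|f\|_{L^1}/x$ for every $x \ge b$, so
\[
\|f\|_{C(p)}^p \;\ge\; \|f\|_{L^1}^p \int_b^{\sup I} x^{-p}\,dx,
\]
and the integral on the right is a strictly positive finite constant in every admissible case (with $\log$ in place of the power when $p = 1$ on $[0,1]$). Since $L^1([a,b])$ is isometric to $L^1[0,1]$ via an affine dilation, $(Y, \|\cdot\|_{C(p)})$ is isomorphic to $L^1[0,1]$.

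Next, I would verify that $Y$ is complemented in $Ces_p(I)$ by the contractive projection $Pf := f \cdot \mathbf{1}_{[a,b]}$. Indeed $\tfrac{1}{x}\int_0^x |(Pf)(t)|\,dt \le \tfrac{1}{x}\int_0^x |f(t)|\,dt$ pointwise in $x$ gives $\|Pf\|_{C(p)} \le \|f\|_{C(p)}$, and clearly $P^2 = P$. Let $T : L^1[0,1] \to Y$ be the isometric isomorphism obtained from affine rescaling of $[0,1]$ onto $[a,b]$, where the target carries the $L^1([a,b])$-norm. I would then define
\[
\|f\|_{C(p)}^{\,*} \;:=\; \|T^{-1}(Pf)\|_{L^1[0,1]} \,+\, \|(I-P)f\|_{C(p)}.
\]
Equivalence of $\|\cdot\|_{C(p)}^{\,*}$ with $\|\cdot\|_{C(p)}$ is a routine consequence of the two-sided estimate on $Y$ established above together with the contractivity of $P$. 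Moreover, the restriction of $\|\cdot\|_{C(p)}^{\,*}$ to $Y$ equals $\|T^{-1}(\cdot)\|_{L^1[0,1]}$, so $T$ is an isometric embedding of $L^1[0,1]$ onto the closed subspace $Y$ of $(Ces_p(I), \|\cdot\|_{C(p)}^{\,*})$. The only genuine computation is the lower bound on $Y$; everything else is formal bookkeeping around the projection $P$.
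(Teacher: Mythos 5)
Your proposal is correct and follows essentially the same route as the paper: both restrict to functions supported on an interior interval, prove a two-sided comparison between the Ces{\`a}ro and $L^1$ norms there, replace the Ces{\`a}ro norm by the $L^1$ norm on that piece to form the equivalent norm, and transport $L^1[0,1]$ onto that subspace by an affine isometry (the paper takes $[1/4,3/4]$ and the explicit map ${\mathcal H}f(t)=2f(2t-1/2)$, while you phrase the same construction via the contractive projection $Pf=f\cdot\chi_{[a,b]}$). Your explicit remark that $Ces_1([0,\infty))=\{0\}$, so the $[0,\infty)$ case genuinely requires $p>1$, is a point the paper passes over silently, but otherwise the arguments coincide.
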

\begin{proof}
For arbitrary $f\in Ces_p:=Ces_{p}[0,1]$ we set
$$
\|f\|_{C(p)}^* := \left\|f\cdot\chi_{[0,1/4]\cup [3/4,1]}\right\|_{C(p)}+\|f\cdot\chi_{(1/4,3/4)}\|_{L^1}.
$$
Since
\begin{eqnarray*}
\left\|f\cdot\chi_{(1/4,3/4)}\right\|_{C(p)}^p &=&\int_{1/4}^{3/4}\left(\frac1x
\int_{1/4}^{x}|f(s)|\,ds\right)^p\,dx+\int_{3/4}^{1}\left(\frac1x
\int_{1/4}^{3/4}|f(s)|\,ds\right)^p\,dx\\
&\le&\left(\frac124^p+\frac14\Big(\frac43\Big)^p\right)\left(\int_{1/4}^{3/4}|f(s)|\,ds\right)^p\le
4^p\|f\cdot\chi_{(1/4,3/4)}\|_{L^1}^p,
\end{eqnarray*}
we have
$$
\|f\|_{C(p)}\le 4\|f\|_{C(p)}^*\;\;(f\in Ces_p).
$$
Conversely,
$$
\left\|f\cdot\chi_{(1/4,3/4)}\right\|_{C(p)}^p\ge 
\int_{3/4}^{1}\left(\frac1x \int_{1/4}^{3/4}|f(s)|\,ds\right)^p\,dx\ge
\frac14\|f\cdot\chi_{(1/4,3/4)}\|_{L^1}^p,
$$
whence for every $f\in Ces_p$
$$
\|f\|_{C(p)}^*\le \|f\|_{C(p)}+\|f\cdot\chi_{(1/4,3/4)}\|_{L^1}\le 5\|f\|_{C(p)}.
$$
Therefore, the norms $\|\cdot\|_{C(p)}^*$ and $\|\cdot\|_{C(p)}$ are equivalent on
$Ces_p:=Ces_{p}[0,1].$ Since the mapping 
$$
f(t)\longmapsto 
{\mathcal H} f(t): = \begin{cases}2f(2t-1/2)  &\text{if}\;\; 1/4<t<3/4\\
0 &\text{if}\;\; 0\le t\le 1/4 \;\;\text{or}\;\; 3/4\le t\le 1\end{cases}.
$$
is a linear isometry from $L^1[0,1]$ onto the subspace of $(Ces_{p}[0,1],\|\cdot\|_{C(p)}^*)$ consisting 
of all functions with support from the interval $(1/4,3/4),$ we obtain the result in the case $I=[0,1].$ 
If $I=[0,\infty)$ the proof follows in the same way.
\end{proof}

\begin{proof}[Proof of Theorem \ref{THEOREM 2.}]
Assume that $(a)$ is not valid and $Ces_p$ is a dual space. Since this property is invariant with respect 
isomorphisms, we see that $(Ces_{p}(I),\|\cdot\|_{C(p)}^*)$, where $\|\cdot\|_{C(p)}^*$ is the norm from
Lemma \ref{LEMMA 2.}, is also a dual space. Then, since it is separable, by the classical 
Bessaga-Pe{\l}czy\`nski result \cite{BP}, the space $(Ces_{p}(I),\|\cdot\|_{C(p)}^*)$ has the Krein-Milman 
property (i.e., every closed bounded set in this space is the closed convex hull of its extreme points). 
Since $(Ces_{p}(I),\|\cdot\|_{C(p)}^*)$ contains a closed subspace isometric to the space $L^1[0,1],$ the 
latter contradicts to the fact that the closed unit ball in $L^1[0,1]$ has no extreme points. Therefore, 
$(a)$ is proved. 

It is well-known that every Banach space which has the Radon-Nikodym property possesses also the 
Krein-Milman property \cite[Theorem 6.5.1]{Dis} (see also \cite[p. 118]{AK}, \cite[p. 229]{Pi} and the 
references given there). Thus, we obtain $(b)$, and the proof is complete. 
\end{proof}

\vspace{3mm}

\noindent
{\footnotesize Department of Mathematics and Mechanics, Samara State University\\
Acad. Pavlova 1, 443011 Samara, Russia} ~{\it E-mail address:} {\tt astash@samsu.ru} \\

\vspace{-3mm}

\noindent
{\footnotesize Department of Engineering Sciences and Mathematics, Lule\r{a} University of Technology\\
SE-971 87 Lule\r{a}, Sweden} ~{\it E-mail address:} {\tt lech.maligranda@ltu.se} \\

\end{document}